\documentclass[11pt]{amsart}

\usepackage[margin=1in]{geometry}
\usepackage{amsmath,amssymb,amsfonts,amsthm,mathtools}
\usepackage{mathrsfs}
\usepackage{enumitem}
\usepackage{hyperref}

\hypersetup{
  colorlinks=true,
  linkcolor=blue,
  citecolor=green,
  urlcolor=blue
}

\newtheorem{thm}{Theorem}[section]
\newtheorem{lem}[thm]{Lemma}
\newtheorem{prop}[thm]{Proposition}

\theoremstyle{definition}
\newtheorem{defn}[thm]{Definition}
\newtheorem{rmk}[thm]{Remark}
\newtheorem{ques}[thm]{Question} 

\newcommand{\ZZ}{\mathbb{Z}}

\newcommand{\RR}{\mathbb{R}}

\begin{document}

\title{Co-Hopfianity is not a profinite property}

\author{Hyungryul Baik}
\address{Department of Mathematical Sciences, KAIST,
291 Daehak-ro, Yuseong-gu, 34141 Daejeon, South Korea}
\email{hrbaik@kaist.ac.kr}

\author{Wonyong Jang}
\address{Department of Mathematical Sciences, KAIST,
291 Daehak-ro, Yuseong-gu, 34141 Daejeon, South Korea}
\email{jangwy@kaist.ac.kr}

\maketitle

\begin{abstract}
We exhibit two finitely generated residually finite groups $G$ and $H$ with isomorphic profinite completions $\widehat{G} \cong \widehat{H}$, such that $G$ is co-Hopfian while $H$ is not.
The construction utilizes Wise's residually finite version of the Rips construction applied to a finitely presented acyclic group $U$ with trivial profinite completion and a strong universality property.
A key feature of our approach is the construction of $H$ as a preimage subgroup of $G$ which is \emph{conjugate to a proper subgroup of itself}.
This renders the non-co-Hopfianity of $H$ immediate without requiring a detailed structural analysis of the Rips kernel.
\end{abstract}

\vspace{0.6cm}
\noindent \textbf{AMS Classification numbers (2020)} Primary: 20E18, 20F67; Secondary: 20E06, 20E26

\vspace{0.6cm}

\section{Introduction}\label{sec:intro}

The classification of finitely generated residually finite groups by their profinite completions is a central theme in geometric group theory.
Since the profinite completion $\widehat{\Gamma}$ of a group $\Gamma$ encodes all finite quotients of $\Gamma$, it is natural to ask to what extent $\widehat{\Gamma}$ determines algebraic or geometric properties of $\Gamma$.
This line of inquiry, initiated by Grothendieck, has led to the study of \emph{profinite rigidity}.
A group property $\mathcal{P}$ is said to be a \emph{profinite property} if for any two finitely generated residually finite groups $\Gamma_1$ and $\Gamma_2$ with $\widehat{\Gamma}_1 \cong \widehat{\Gamma}_2$, the group $\Gamma_1$ has $\mathcal{P}$ if and only if $\Gamma_2$ has $\mathcal{P}$.

While some properties, such as being abelian, being nilpotent, satisfying a nontrivial law \cite{bridson2025chasing}, and being polycyclic \cite{sabbagh1991polycyclic}, are profinite properties, it is well known that many geometric and asymptotic properties are not.
Notable examples of properties that fail to be profinite include amenability \cite{kionke2023amenability}, 
Kazhdan’s property (T) \cite{aka2012profinite}, 
being conjugacy separable \cite{cotton2012conjugacy}, 
various finiteness properties \cite{lubotzky2014finiteness}, 
Serre’s property FA \cite{cheetham2024property}, 
bounded cohomology \cite{echtler2024bounded}, 
being torsion-free \cite{bridson2024profinite}, 
left-orderability \cite{bridson2024profinite}, and 
bi-orderability \cite{jang2025bi}.

In this paper, we focus on the \emph{co-Hopfian} property.
A group is co-Hopfian if it does not contain a proper subgroup isomorphic to itself.
This property can be viewed as a dual notion to the \emph{Hopfian} property, which requires that every surjective homomorphism to itself is an isomorphism.
As finitely generated residually finite groups are Hopfian (by Mal'cev \cite{MR3420}), Hopficity is vacuously a profinite invariant among them.
However, if we take the definition of a profinite invariant to just mean ``detectable by finite quotients'', so that it does not \emph{a priori} assume residual finiteness, we can find easy counterexamples: one can take any finitely generated non-Hopfian group and its largest residually finite quotient, like the Baumslag--Solitar group $\mathrm{BS}(2, 3)$ and $\ZZ[1/6] \rtimes_{2/3} \ZZ$. While perhaps not that interesting, it is still worth mentioning. In contrast, the co-Hopfian property is far more delicate.

Our main goal is to demonstrate that co-Hopfianity is \emph{not} a profinite property.

\begin{thm}[Theorem \ref{thm:main}] \label{thm:intro-main}
There exist finitely generated residually finite groups $G$ and $H$ such that:
\begin{enumerate}[label=(\arabic*)]
  \item $\widehat{G} \cong \widehat{H}$;
  \item $G$ is co-Hopfian;
  \item $H$ is not co-Hopfian.
\end{enumerate}
\end{thm}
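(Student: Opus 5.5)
We follow the strategy announced in the abstract. First, take a finitely presented acyclic group $U$ with $\widehat{U}=1$. We also want $U$ to contain a copy of every finitely presented group, or at least enough universality that some finitely generated subgroup $P\le U$ is conjugate in $U$ to a proper subgroup of itself. Concretely, we would like $u\in U$ with $uPu^{-1}\subsetneq P$. Such a $U$ can be obtained by embedding a Higman-type universal finitely presented group into an acyclic group with no finite quotients, using Baumslag--Dyer--Heller style constructions. Next, apply Wise's residually finite Rips construction. This gives a short exact sequence $1\to N\to \Gamma\to U\to 1$ with $\Gamma$ a torsion-free hyperbolic, virtually special (hence residually finite) group and $N$ finitely generated.

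Set $G=N$ and $H=\pi^{-1}(P)\le\Gamma$, where $\pi\colon\Gamma\to U$. The steps are as follows.

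\emph{(i) Finite generation and residual finiteness.} $H$ is finitely generated, being an extension of $N$ by the finitely generated group $P$. Both $G$ and $H$ are residually finite as subgroups of $\Gamma$.

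\emph{(ii) Profinite equivalence.} Since $U$ has no finite quotients and $H_1(U)=H_2(U)=0$, the standard Platonov--Tavgen / Bridson--Grunewald argument shows that $N\hookrightarrow\Gamma$ induces an isomorphism $\widehat N\cong\widehat\Gamma$. For $H$, we need $\widehat{P}=1$ together with enough acyclicity of $P$ (say $H_2(P)=0$) to run the same argument for $N\le H$, giving $\widehat N\cong\widehat H$. We will therefore choose $P$ itself with trivial profinite completion and $H_1(P)=H_2(P)=0$, for instance by embedding the needed configuration inside an acyclic subgroup. The alternative is to compare both $\widehat G$ and $\widehat H$ with $\widehat\Gamma$ via the induced maps. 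This choice of $P$ is a delicate point, and it needs care in the selection of $U$.

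\emph{(iii) $H$ is not co-Hopfian.} Lift $u$ to $\gamma\in\Gamma$. Then $\gamma H\gamma^{-1}=\pi^{-1}(uPu^{-1})\subsetneq \pi^{-1}(P)=H$, so conjugation by $\gamma$ is an injective map of $H$ onto a proper subgroup of itself. This step requires no structure of $N$.

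\emph{(iv) $G=N$ is co-Hopfian.} This is the main obstacle. $N$ is an infinite normal finitely generated subgroup of the one-ended torsion-free hyperbolic group $\Gamma$ (with infinite index), so $N$ is not hyperbolic-type and standard Sela results do not apply directly. The approach we would try first is as follows. Suppose $\phi\colon N\to N$ is injective. Residual finiteness gives $\widehat N\cong\widehat\Gamma$. By choosing the Rips presentation so that $\Gamma$ has trivial centralizers for $N$ and rigid structure, we would try to show that $\phi$ extends to an injective endomorphism of $\Gamma$, since $\Gamma$ is determined by the action of $N$ on $\partial\Gamma$, as $N$ has full limit set. Then we would invoke Sela's theorem that torsion-free one-ended hyperbolic groups are co-Hopfian, conclude that the extension is an automorphism, and deduce $\phi(N)=N$ from normality and finite-index/Hopfian considerations.

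If extending $\phi$ is too hard, we would instead arrange $\Gamma$ to be a closed-manifold or Kazhdan-type group where rigidity is easier. An alternative is to choose $G$ to be a different group in the same profinite class, for example $\Gamma$ itself if $\widehat\Gamma\cong\widehat H$ holds, since $\Gamma$ is co-Hopfian by Sela. Indeed, taking $G=\Gamma$ and $H=\pi^{-1}(P)$ avoids the analysis of $N$ entirely, provided (ii) is done relative to $\Gamma$. We expect this to be the intended route.
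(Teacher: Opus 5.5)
Your closing suggestion, to take $G=\Gamma$ and $H=\pi^{-1}(P)$, is the paper's route, and your steps (i) and (iii) match it. Drop the $G=N$ line: (iv) gives no actual argument that $N$ is co-Hopfian, since nothing forces an injective endomorphism of $N$ to extend to $\Gamma$.

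The genuine gap is the one you flag yourself: you never construct $P$ and $u$. For Bridson--Grunewald to apply to $1\to N\to H\to P\to 1$ you need $P$ finitely presented with $\widehat P=1$ and $H_2(P;\ZZ)=0$, and also some $u\in U$ with $uPu^{-1}\subsetneq P$. Your remedy, ``embedding the needed configuration inside an acyclic subgroup,'' cannot work: $\widehat P$ and $H_2(P)$ are invariants of $P$ alone and do not change when the ambient group is enlarged.

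The missing idea is to make $P$ a copy of $U$ itself:
\begin{enumerate}
\item By universality there is an embedding $\psi\colon U*F_2\hookrightarrow U$. Then $\sigma=\psi|_U$ is injective and not surjective, because its image lies properly inside $\psi(U*F_2)$.
\item The ascending HNN extension $L=\langle U,s\mid s^{-1}xs=\sigma(x),\ x\in U\rangle$ is finitely presented, so universality again gives an embedding $\theta\colon L\hookrightarrow U$.
\item Put $P=\theta(U)$ and $u=\theta(s)^{-1}$. Then $uPu^{-1}=\theta(\sigma(U))\subsetneq P$.
\item Since $P\cong U$, it is automatically finitely presented and acyclic with $\widehat P=1$.
\end{enumerate}
So no special care in choosing $U$ is needed beyond Bridson's universal acyclic group.

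A smaller omission: Sela's theorem requires $\Gamma$ to be one-ended, and you assert this without proof. It follows from three facts.
\begin{itemize}
\item $N$ is nontrivial. Otherwise $\Gamma\cong U$, which is not residually finite because $U\neq 1$ and $\widehat U=1$.
\item $N$ is a finitely generated normal subgroup of infinite index. A nontrivial free product has no such nontrivial normal subgroup (Baumslag; Karrass--Solitar).
\item $\Gamma$ is torsion-free and surjects onto the infinite nonabelian group $U$, so it is neither finite nor $\ZZ$.
\end{itemize}
Stallings' theorem then rules out more than one end.
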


Our construction relies on Wise's residually finite version of the Rips construction \cite{wise2003residually}, combined with a finitely presented acyclic group $U$ with trivial profinite completion and a universality property established by Bridson \cite{bridson2019homology}.
We construct $G$ as a torsion-free hyperbolic, residually finite group surjecting onto $U$ with finitely generated kernel $K$.
We then choose a subgroup $A<U$ and an element $t\in U$ such that $t^{-1}At\subsetneq A$ and $A\cong U$.
Defining $H:=\pi^{-1}(A)<G$, conjugation by a lift of $t$ gives an isomorphism from $H$ onto a proper subgroup of itself, so $H$ is not co-Hopfian.
Finally, we show $\widehat{G}\cong\widehat{H}$ using a criterion of Bridson--Grunewald \cite{bridson2010schur}.

The paper is organized as follows. 
In Section~\ref{sec:prelim}, we briefly recall the definition of co-Hopfian groups, profinite completions, and Rips construction.
We will then describe the group constructed by Bridson.
Using Rips construction and Bridson's group, we will prove that toral relative hyperbolicity is also not a profinite property (Proposition~\ref{prop:Toral_RH}).
We will complete the proof of the main result, Theorem~\ref{thm:main}, in Section~\ref{sec:main}.

\vspace{0.4cm}

\noindent \textbf{Acknowledgment.}
Both authors thank Martin Bridson for fruitful discussions and for his interest in this work.
We also thank Jan Kim and Francesco Fournier-Facio for useful discussions and suggestions which improved the exposition of the paper. 
This work was supported by the National Research Foundation of Korea (NRF) grant funded by the Korean government (MSIT)
(No.\ RS-2025-00513595).

\section{Preliminaries}\label{sec:prelim}

\subsection{Hopfian and co-Hopfian groups}
A group $G$ is \emph{co-Hopfian} if every injective homomorphism $\phi:G\to G$ is surjective.
Equivalently, $G$ is not isomorphic to any of its proper subgroups.
Conversely, $G$ is \emph{Hopfian} if every surjective homomorphism $G\to G$ is injective.
Recall that finitely generated residually finite groups are Hopfian, but they need not be co-Hopfian.

As mentioned in the Introduction, the co-Hopfian property is more delicate than the Hopfian property. 
It is known that every hyperbolic group is Hopfian (the torsion-free case was previously obtained by Sela \cite{sela1999hopf}, and the general case by Weidmann--Reinfeldt \cite[Corollary 7.9]{weidmann2019makanin}), but a free group is not co-Hopfian. Hence, there are many hyperbolic groups that are not co-Hopfian.
We will use the following result, which was originally proved by Z. Sela for the torsion-free case \cite[Theorem 4.4]{sela1997structure} and extended to all one-ended hyperbolic groups without the torsion-free assumption by C. Moioli \cite{moioli2013thesis}.

\begin{thm}[{\cite[Theorem 4.4]{sela1997structure}, \cite{moioli2013thesis}}] \label{thm:Sela}
Every one-ended hyperbolic group is co-Hopfian.
\end{thm}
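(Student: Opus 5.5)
The plan is to follow Sela's approach through actions on real trees and the shortening argument. Suppose, for a contradiction, that $G$ is one-ended hyperbolic and $\phi\colon G\to G$ is injective but not surjective. Fix a finite generating set $S$ of $G$ and consider the iterates $\phi^n$. These are again monomorphisms, and their images form a strictly decreasing chain $G\supsetneq\phi(G)\supsetneq\phi^2(G)\supsetneq\cdots$. Let $\mathrm{Mod}(G)$ be the modular group: the subgroup of $\mathrm{Aut}(G)$ generated by inner automorphisms, Dehn twists along the edges of the JSJ decomposition of $G$ over virtually cyclic subgroups, and automorphisms coming from the flexible (orbifold) vertex groups. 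For each $n$, choose $\alpha_n\in\mathrm{Mod}(G)$ so that $\psi_n:=\phi^n\circ\alpha_n$ is \emph{short*}, that is, $\max_{s\in S}|\psi_n(s)|$ is minimal, up to conjugation, among all precompositions of $\phi^n$ by modular automorphisms.

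\textbf{Step 1 (short maps are bounded).} I claim that $\max_{s\in S}|\psi_n(s)|$, measured up to conjugation, is bounded independently of $n$. Otherwise, pass to a subsequence and rescale the Cayley graph by these lengths. The Bestvina--Paulin construction then gives a limiting nontrivial isometric action of $G$ on a real tree $T$. Because every $\psi_n$ is injective, this action is faithful on the level of the limiting quotient (in Sela's terminology, the limit group is $G$ itself). Stabilizers of nondegenerate arcs are virtually abelian, hence virtually cyclic in $G$, and tripod stabilizers are finite.

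Next, apply the Rips machine to decompose $T$ into simplicial, axial and surface-type (IET) pieces. One-endedness of $G$ rules out splittings over finite groups. The resulting splittings over virtually cyclic groups are therefore encoded in the JSJ decomposition, so each piece is controlled by an element of $\mathrm{Mod}(G)$. The shortening argument then produces modular automorphisms that strictly shorten $\psi_n$ for $n$ large, which contradicts the choice of $\psi_n$ as short. I expect this step to be the main obstacle: carrying out the shortening argument carefully for each type of component, especially with torsion present as in Moioli's extension.

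\textbf{Step 2 (finiteness).} Since $G$ is hyperbolic, there are only finitely many homomorphisms $G\to G$ with bounded word lengths on $S$ up to conjugation. Hence there are indices $m<n$ and an inner automorphism $c$ with $\psi_n=c\circ\psi_m$, that is, $\phi^n\alpha_n=c\,\phi^m\alpha_m$. Rearranging gives $\phi^m(\phi^{n-m}\alpha_n\alpha_m^{-1})=c\,\phi^m$. Because the image of $c\,\phi^m$ is $c(\phi^m(G))$, injectivity of $\phi^m$ lets us cancel it after conjugating suitably.

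\textbf{Step 3 (conclusion).} The cancellation shows that $\phi^{n-m}(G)$ is a conjugate of $G$, so $\phi^{n-m}(G)=G$. This contradicts the strict decrease of the chain. In Step 2 I would make the cancellation precise as follows: $c\,\phi^m(G)=\phi^m(G)$ up to a conjugating element lying in $\phi^m(G)$, which I would handle by absorbing inner automorphisms into $\alpha$. If this absorption fails, I would instead use that $\phi^m(G)$ and its conjugate coincide as images of the same map, so that $c$ restricts to an automorphism of $\phi^m(G)$.
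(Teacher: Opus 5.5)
The paper does not prove this statement; it quotes it from Sela (torsion-free case) and Moioli (general case). Your outline therefore has to stand on its own. Step~1 is the standard shortening argument and is acceptable as an outline.

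\textbf{The gap is in Steps~2--3.} Pigeonholing gives $\phi^m\circ(\phi^{k}\beta)=c_g\circ\phi^m$ with $k=n-m$, $\beta=\alpha_n\alpha_m^{-1}$, and an \emph{arbitrary} $g\in G$.
\begin{itemize}
\item You can cancel $\phi^m$ only if $c_g\circ\phi^m=\phi^m\circ c'$ for some automorphism $c'$. That essentially requires $g\in\phi^m(G)$, up to the centralizer of $\phi^m(G)$.
\item Absorbing inner automorphisms into $\alpha$ only handles conjugators of the form $\phi^m(h)$, since $c_{\phi^m(h)}\circ\phi^m=\phi^m\circ c_h$. Nothing in your argument puts $g$ there.
\item Your fallback is refuted by the relation itself. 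Comparing images gives $g\,\phi^m(G)\,g^{-1}=\phi^n(G)\subsetneq\phi^m(G)$, so $c_g$ maps $\phi^m(G)$ onto a \emph{proper} subgroup. It does not restrict to an automorphism of $\phi^m(G)$.
\end{itemize}
So all the pigeonhole step yields is that the copy $\phi^m(G)\cong G$ is conjugate onto a proper subgroup of itself. In other words, $c_g|_{\phi^m(G)}$ is just another non-surjective monomorphism of $G$, and the argument is circular. This cannot be dismissed on general grounds: finitely generated subgroups of hyperbolic groups can be conjugate onto proper subgroups of themselves. That is exactly how the paper's group $H$ in Proposition~\ref{prop:H-not-cohopf} works.

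\textbf{What is missing} is either an argument forcing the conjugator into the image, or a conjugation-invariant of $\phi^n(G)$ that strictly changes with $n$.
\begin{itemize}
\item If $[G:\phi(G)]=d<\infty$, the index works: $[G:\phi^n(G)]=d^n$, so the images cannot be conjugate for $n\neq m$. The infinite-index case is the hard one.
\item If $\mathrm{Mod}(G)=\mathrm{Inn}(G)$, there is a clean fix. From $\phi^{m+k}=c_g\circ\phi^m$, evaluate at $\phi^k(x)$, and separately apply $\phi^k$ to both sides. Comparing shows $g^{-1}\phi^k(g)$ centralizes the non-elementary subgroup $\phi^{m+k}(G)$. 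For torsion-free $G$ this forces $\phi^k(g)=g$. Hence $g\in\phi^{jk}(G)\subseteq\phi^m(G)$ for $jk\ge m$, cancellation becomes legitimate, and $\phi^k$ is inner, a contradiction.
\item Once a non-inner modular automorphism $\beta$ appears in the relation, this trick breaks, because $\beta$ does not commute with $\phi$. Closing the gap then needs genuinely more information, such as how a monomorphism acts on the rigid and surface-type pieces of the JSJ decomposition.
\end{itemize}
Supplying this is an essential part of the proof, not bookkeeping.
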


\subsection{Profinite completion}
 One of the main concepts in this paper is that of profinite properties.
 To introduce this notion precisely, we begin with the definition of profinite groups.

\begin{defn}
 We say that a group $G$ is \emph{profinite} if $G$ is a topological group isomorphic to the inverse limit of an inverse system of discrete finite groups.
\end{defn}
 
For a group $G$, let $\mathcal{N}$ be the set of all normal subgroups of finite index in $G$.
Consider the following inverse limit
\[
\widehat{G}=\varprojlim_{N\in\mathcal{N}} G/N.
\]
By the definition, $\widehat{G}$ is profinite. The group $\widehat{G}$ is called the \emph{profinite completion} of $G$.

\begin{rmk}
 There is the natural homomorphism $G$ to its profinite completion $\widehat{G}$ given by \[ i : g \mapsto \{ gN \}_{N \in \mathcal{N}}. \]
This homomorphism is not injective in general. Indeed, the homomorphism $i$ is injective if and only if $G$ is residually finite.
\end{rmk}

Let $\mathcal{C}(G)$ be the set of isomorphism classes of all finite quotients of $G$.
For finitely generated groups $G$ and $H$, $\widehat{G} \cong \widehat{H}$ if and only if $\mathcal{C}(G) = \mathcal{C}(H)$ (see \cite{dixon1982profinite} and \cite{nikolov2007finitely}).
In other words, $G$ and $H$ have the isomorphic profinite completion if and only if they have the same finite quotients up to isomorphic class.

We now define the concept of a profinite property using the profinite completion.

\begin{defn}
 Let $\mathcal{P}$ be a group property of finitely generated residually finite groups. We say that $\mathcal{P}$ is a \emph{profinite property} if for any two finitely generated residually finite groups $G_1$ and $G_2$ with $\widehat{G_1} \cong \widehat{G_2}$, $G_1$ satisfies $\mathcal{P}$ if and only if $G_2$ satisfies $\mathcal{P}$.
\end{defn}

Namely, if a group property $\mathcal{P}$ is a profinite property, then it can be detected by profinite completion, and equivalently, by taking finite quotients.
For more details on profinite groups, profinite rigidity, and profinite properties, 
we refer to \cite{nikolov2007finitely}, \cite{MR2599132}, \cite{reid2018profinite}, and \cite{bridson2025chasing}.

We close this subsection by recalling the following fact. This result gives us the isomorphism between their profinite completions.

\begin{lem}[{\cite[Theorem A]{bridson2010schur}}]\label{lem:Profinite_Completion}
Let
\[
1 \to N \to G \to Q \to 1
\]
be a short exact sequence where $Q$ is finitely presented.
Then the map $\widehat{N}\to\widehat{G}$ induced by inclusion is an isomorphism if $\widehat{Q}=1$ and $H_2(Q,\ZZ)=0$.
\end{lem}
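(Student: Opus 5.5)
The plan is to verify separately that the map $\widehat{N}\to\widehat{G}$ is surjective and injective. Surjectivity should follow from $\widehat{Q}=1$ alone, while injectivity will use $H_2(Q,\ZZ)=0$ through a splitting argument for central extensions. Throughout I will assume that $N$ is finitely generated. This holds in our application, where $N$ is the finitely generated Rips kernel. It lets me replace any finite-index subgroup of $N$ by a characteristic one.

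\emph{Surjectivity.} Let $M\trianglelefteq G$ have finite index. Then $G/NM$ is a finite quotient of $Q$, and since $\widehat{Q}=1$ this forces $NM=G$. Hence the image of $N$ is dense in $\widehat{G}$. The image of $\widehat{N}$ is compact, hence closed, so the map is onto.

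\emph{Injectivity.} It suffices to show that the profinite topology on $G$ induces the full profinite topology on $N$. Concretely, for every finite-index $L\le N$ I want a finite-index $M\le G$ with $M\cap N\subseteq L$. Since $N$ is finitely generated, I may shrink $L$ to a characteristic subgroup of finite index in $N$, namely the intersection of the finitely many subgroups of index $[N:L]$. Then $L\trianglelefteq G$, and I pass to $\bar G=G/L$, which fits into an extension $1\to B\to\bar G\to Q\to 1$ with $B=N/L$ finite.

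The conjugation action induces a homomorphism $Q\to\mathrm{Out}(B)$. Its target is finite and $\widehat{Q}=1$, so it is trivial. Therefore the image of $\bar G$ in $\mathrm{Aut}(B)$ equals $\mathrm{Inn}(B)$, which gives $\bar G=B\,C$ with $C=C_{\bar G}(B)$ normal and $C\cap B=Z(B)$. Consequently $1\to Z(B)\to C\to Q\to 1$ is a central extension of $Q$ by a finite abelian group.

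I expect the main step to be showing that this central extension splits with a normal complement. Because $Q$ is finitely generated, $\widehat{Q}=1$ forces $Q^{ab}$ to have no finite quotients and hence to vanish, so $H_1(Q,\ZZ)=0$. By hypothesis $H_2(Q,\ZZ)=0$ as well, so $Q$ is superperfect and every central extension of $Q$ splits as a direct product. This follows from the universal coefficient theorem: $H^2(Q,Z(B))=\mathrm{Ext}(H_1Q,Z(B))\oplus\mathrm{Hom}(H_2Q,Z(B))=0$. Thus $C\cong Z(B)\times Q$. The factor $Q'=[C,C]$ is a complement to $Z(B)$ in $C$. It is characteristic in $C$, hence normal in $\bar G$, and it satisfies $Q'\cap B\subseteq Q'\cap Z(B)=1$.

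Finally, let $M\le G$ be the preimage of $Q'$. Then $G/M\cong\bar G/Q'$, which is a quotient of $B\times Q$ by a copy of $Q$ meeting $B$ trivially, so it is finite. Moreover $M\cap N=L$. This proves that the profinite topology on $N$ is induced from $G$, so $\widehat{N}\to\widehat{G}$ is injective. Together with surjectivity and continuity, the map is an isomorphism of profinite groups.
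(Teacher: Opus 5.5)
Your argument is correct. The paper gives no proof of this lemma; it only cites Bridson's Theorem A. What you have written is therefore a self-contained replacement for the citation, and it is the standard Bridson--Grunewald argument:
\begin{itemize}
\item density of $N$ in $\widehat{G}$ comes from $\widehat{Q}=1$;
\item for injectivity, you pass to a characteristic finite-index $L\le N$ and kill the outer action of $Q$ on $N/L$ using $\widehat{Q}=1$;
\item superperfectness then splits the central extension $1\to Z(B)\to C\to Q\to 1$, and $[C,C]$ serves as a normal complement.
\end{itemize}
What the citation buys is brevity. What your route buys is transparency about the hypotheses: your proof never uses that $Q$ is finitely presented. It only needs $Q$ finitely generated, to get $H_1(Q,\ZZ)=0$ from $\widehat{Q}=1$.

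Your standing assumption that $N$ is finitely generated is not a mere convenience. The lemma as printed, with no finiteness hypothesis on $N$, is false. Take $Q=U$ and $G=U*\ZZ=U*\langle t\rangle$, mapping to $U$ by killing $t$. Every finite quotient of $G$ is cyclic, since $U$ must map trivially, so $\widehat{G}\cong\widehat{\ZZ}$. By Kurosh, the kernel $N$ is free on the infinitely many conjugates $utu^{-1}$, so $\widehat{N}$ is nonabelian. Hence $\widehat{N}\to\widehat{G}$, which has abelian image, is not injective. Finite presentability of $G$ does not rescue the statement either. You should therefore state finite generation of $N$ as a necessary hypothesis rather than a convenience. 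It holds in both places the paper applies the lemma, where $N=K$ is $3$-generated: in $1\to K\to G\to U\to 1$ and in $1\to K\to H\to A\to 1$.

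One small step deserves a sentence you left out. Your claim that $G/M$ is ``a quotient of $B\times Q$'' needs $\bar G=BC=B\,(Z(B)\times Q')=BQ'$, where $B$ and $Q'$ are both normal in $\bar G$ and $B\cap Q'=1$. That gives $\bar G=B\times Q'$, hence $G/M\cong\bar G/Q'\cong B$ is finite and $M\cap N=L$.
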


\begin{rmk}
The statement in \cite{bridson2010schur} is formulated in a slightly more general way; in the case $\widehat{Q}=1$, the relevant condition reduces to $H_2(Q,\ZZ)=0$.
\end{rmk}

\subsection{Wise's residually finite Rips construction}
In \cite{MR642423}, E.~Rips suggested the construction of a hyperbolic group from an arbitrary finitely presented group.
Using this technique, a lot of subgroups of hyperbolic groups with a pathological property are established.

 This tool was revised in many literatures for various purposes.
In this paper, we recall the following variation of the Rips construction due to Wise.
This revised technique guarantees that a hyperbolic group $G$ is residually finite.

\begin{thm}[{\cite[Theorem 3.1]{wise2003residually}}]\label{thm:RF_Rips_Const}
Let $Q$ be a finitely presented group.
Then there exists a short exact sequence
\[
1 \to K \to G \xrightarrow{\pi} Q \to 1
\]
such that:
\begin{itemize}
  \item $K$ is finitely generated (indeed, generated by three elements),
  \item $G$ is hyperbolic,
  \item $G$ is torsion-free,
  \item $G$ is residually finite.
\end{itemize}
\end{thm}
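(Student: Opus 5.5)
The plan is to follow Rips's original construction, but to choose the relators carefully so that the resulting group is not merely a small cancellation group but is also the fundamental group of a compact nonpositively curved square complex with extra structure. That structure is what delivers residual finiteness. Fix a finite presentation $Q=\langle x_1,\dots,x_m \mid r_1,\dots,r_n\rangle$ and introduce three new generators $a,b,c$. The group $G$ will be given by the presentation
\[
\langle x_1,\dots,x_m,a,b,c \mid r_j = w_j,\ x_i^{\epsilon} y x_i^{-\epsilon} = w_{i,y,\epsilon} \rangle,
\]
where $y\in\{a,b,c\}$, $\epsilon=\pm1$, and the $w$'s are long positive words in $a,b,c$.

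With this presentation, two facts are immediate. First, the subgroup $K=\langle a,b,c\rangle$ is normal, because the conjugation relators show that each $x_i^{\pm1}$ normalises it. Second, killing $a,b,c$ recovers the presentation of $Q$. Together these give the short exact sequence $1\to K\to G\xrightarrow{\pi}Q\to1$, with $K$ generated by three elements.

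\textbf{Hyperbolicity and torsion-freeness.} I would choose the words $w$ as products of distinct long blocks, for example $a b^{k} a b^{k+1}\cdots$ with pairwise distinct large exponents. This makes the presentation satisfy $C'(1/6)$, since any common subword of two relators is short relative to both. I would also arrange that no relator is a proper power. Standard small cancellation theory (Greendlinger's lemma and the resulting linear isoperimetric inequality) then shows that $G$ is hyperbolic. Since the presentation complex is aspherical when no relator is a proper power, $G$ is torsion-free.

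\textbf{Residual finiteness.} This is the main obstacle, and it is where Wise's contribution lies. I would first rewrite each relator as the boundary of a square, or of a small strip of squares, by subdividing. The goal is for $G$ to become $\pi_1$ of a compact VH-complex $X$: a square complex in which the edges are split into vertical edges (labelled by $a,b,c$) and horizontal edges (labelled by the $x_i$ and auxiliary letters), and each square has alternating vertical and horizontal sides.

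The relator words then have to be chosen so that the links are ``large'' (giving nonpositive curvature, in fact negative curvature) and so that $X$ is clean, meaning its hyperplanes embed and do not self-osculate. For such complexes, $\pi_1$ splits as a graph of free groups whose vertex and edge groups are free and whose attaching maps are well behaved. Wise's theory of clean (or malnormal) VH-complexes then yields residual finiteness. The argument constructs finite covers in which a given element is not lifted to a loop. This is done by completing finite covers of the vertical and horizontal graphs so that they are compatible along the squares.

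The delicate point is making all of these combinatorial conditions hold simultaneously with $C'(1/6)$. If $X$ falls short of being clean, the first thing I would try is to pass to the subgroup-separability argument for negatively curved VH-complexes, which shows that the vertical free subgroups are separable. I would then deduce residual finiteness from that separability.
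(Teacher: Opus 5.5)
The paper gives no proof of this statement; it quotes Wise, so your sketch has to stand on its own. Your treatment of the short exact sequence, hyperbolicity and torsion-freeness is the standard Rips argument and is fine. The gap is residual finiteness, which is exactly what Wise adds to Rips, and the route you describe cannot work.

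In a nonpositively curved VH-complex $X$, a closed horizontal edge path $\gamma_H$ and a closed vertical edge path $\gamma_V$ at the same vertex are homotopic only if both are null-homotopic. To see this, take a minimal-area disc diagram for $\gamma_H\gamma_V^{-1}$ in $\widetilde{X}$. Each dual curve through horizontal edges has both ends on $\gamma_H$, and each dual curve through vertical edges has both ends on $\gamma_V$. A parity argument then shows that two such curves cross an even number of times, hence never, since minimal diagrams have no bigons. Every square is such a crossing, so the diagram has no squares. Horizontal and vertical edges cannot cancel, so $\gamma_H$ and $\gamma_V$ are already trivial in the $1$-skeleton.

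With the $x_i$ horizontal and $a,b,c$ vertical, the relator $r_j=w_j$ identifies a horizontal loop with a vertical one. But $r_j\neq 1$ in $G$ whenever $r_j$ is not freely trivial: by Greendlinger's lemma a word in the $x_i$ alone cannot contain half of a relator, because the $x$-parts of all relators are short. So no choice of words, subdivision, or auxiliary horizontal letters turns the relator cells into squares of a nonpositively curved VH-complex. The conjugation relators fail as well. A strip of squares has vertical sides of equal length, whereas $w_{i,y,\epsilon}$ is strictly longer than $y$ once $y$ is taken to be the letter of least subdivided length.

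Beyond this, the residual-finiteness step is only asserted. You invoke ``Wise's theory of clean VH-complexes'' without stating a criterion or checking its hypotheses. The fallback is not a valid implication either. Separability of the vertical free subgroups only says they are closed in the profinite topology of $G$, whereas residual finiteness is separability of the trivial subgroup. Whatever Wise does, it is not this naive encoding.

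If you want to complete a proof from your first two steps, later machinery will do it. Your $G$ is a finitely presented $C'(1/6)$ group, so by Wise's cubulation theorem for such groups it acts properly and cocompactly on a $\mathrm{CAT}(0)$ cube complex. Being hyperbolic, it is then virtually special by Agol's theorem. Hence it virtually embeds in a right-angled Artin group and is residually finite.
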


\subsection{A universal acyclic group}
We will use the following group constructed by Bridson in \cite{bridson2019homology}.

\begin{lem}[{\cite[Theorem A]{bridson2019homology}}]\label{lem:U}
There exists a finitely presented group $U$ with the following properties:
\begin{enumerate}[label=(\alph*)]
  \item $U$ is acyclic, i.e.\ $H_i(U,\ZZ)=0$ for all $i\ge 1$;
  \item $\widehat{U}=1$;
  \item every finitely presented group embeds in $U$.
\end{enumerate}
\end{lem}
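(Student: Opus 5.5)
The plan is to build $U$ in three stages: first a finitely presented group containing every finitely presented group, then an acyclic finitely presented group containing that, and finally an acyclic finitely presented group with no nontrivial finite quotients. The first stage is classical. Higman's embedding theorem lets one embed a universal recursively presented group, such as the free product of all finitely presented groups (which is recursively presentable), into a finitely presented group $P$. Then every finitely presented group embeds in $P$, and property (c) is inherited by any group containing $P$.

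For (a), I would embed $P$ into a finitely presented acyclic group $B$. I would use the Baumslag--Dyer--Heller / Kan--Thurston style functorial constructions, in the version that preserves finite presentability. I would also arrange that $B$ is generated by finitely many elements $b_1,\dots,b_n$ of infinite order, by passing to $B*\ZZ$ first and re-embedding if necessary. For (b), I would use Higman's four-generator group $J$. It is finitely presented, infinite, has no nontrivial finite quotients, and is acyclic. The idea is to glue copies of $J$ to $B$ so that every generator $b_i$ becomes conjugate into a copy of $J$. In any finite quotient of the resulting group $U$ the copies of $J$ die, hence every $b_i$ dies, hence the whole quotient is trivial, giving $\widehat U=1$. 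Finite presentability is preserved because we only take finitely many amalgamated products or HNN extensions along finitely generated subgroups of finitely presented groups.

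Acyclicity of $U$ would be checked with the Mayer--Vietoris sequence for amalgamated free products. If $U=X*_C Y$ with $X$, $Y$ and $C$ all acyclic, then $U$ is acyclic. The main obstacle is exactly here: naively amalgamating along $\langle b_i\rangle\cong\ZZ$ creates a class $H_1(\ZZ)=\ZZ$ that survives into $H_2(U)$, because it maps to zero in the acyclic factors. So the gluing must be done along \emph{acyclic} subgroups.

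My first attempt at this obstacle would be to replace $B$ by $B\times J$, which is acyclic by the Künneth formula. In $B\times J$, for each $i$, I would look for an acyclic subgroup $C_i$ that contains a conjugate of $b_i$ and admits an embedding into a group with no finite quotients, for instance a copy of $J$ itself or of $J\times J$. Then I would amalgamate $B\times J$ successively with such groups along the $C_i$, each time over an acyclic subgroup, so that Mayer--Vietoris gives acyclicity and the profinite-triviality argument above still applies. If no such $C_i$ is available directly, the fallback is to first modify $B$, again via the acyclic embedding machinery, so that each $b_i$ lies in an acyclic subgroup isomorphic to a subgroup of a group with no finite quotients. Finding such acyclic subgroups is the step I expect to require the most care.
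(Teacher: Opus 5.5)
\textbf{Genuine gap.} The paper gives no proof of this lemma; it quotes it from Bridson's Theorem A. Your first two stages are standard and fine: Higman's embedding of a recursively presented universal group into a finitely presented $P$, and then the Baumslag--Dyer--Heller embedding of $P$ into a finitely presented acyclic $B$. Your facts about Higman's group $J$ are also correct.

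The gap is the third stage, which you leave open. You need, for each generator $b_i$, two things:
\begin{itemize}
\item a finitely generated acyclic subgroup $C_i$ containing $b_i$ (modulo elements already known to die);
\item an embedding of $C_i$ into a finitely presented acyclic group with trivial profinite completion.
\end{itemize}
You construct no such $C_i$, and as posed the requirement is circular. The evident acyclic subgroups of $B\times J$ through $(b_i,1)$ are things like $B\times 1$ or $B\times J$. Using them requires embedding $B$ into a finitely presented acyclic group with trivial profinite completion, which is exactly what you are trying to prove. The natural small candidates, such as $\langle b_i\rangle\times J$, have the homology of a circle and reproduce the very $H_2$ obstruction you identified. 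So (a) and (b) are never achieved together.

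\textbf{The missing idea.} Your diagnosis that ``the gluing must be done along acyclic subgroups'' is too strong. Mayer--Vietoris only needs the edge-group homology to map isomorphically onto the vertex-group homology. So you may glue along $\ZZ$, provided a vertex group supplies a matching $H_1$-class. One way to close the gap:
\begin{itemize}
\item Let $b_1,\dots,b_m$ generate $B$, and let $F_m$ be free on $t_1,\dots,t_m$.
\item Let $U$ be the tree of groups with central vertex $B\times F_m$ and leaves $J^{(1)},\dots,J^{(m)}$ (copies of $J$), amalgamating $\langle (b_j,t_j)\rangle$ with $\langle a^{(j)}_1\rangle$. Both are infinite cyclic.
\end{itemize}
Since $B$ is acyclic, K\"unneth gives $H_*(B\times F_m)\cong H_*(F_m)$. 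Because $B$ is perfect, the classes of $(b_j,t_j)$ form a basis of $H_1\cong\ZZ^m$. Mayer--Vietoris for the tree then gives $H_i(U)=0$ for all $i\ge 1$. Moreover $U$ is finitely presented and contains $B$.

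For a finite quotient $\rho$ of $U$:
\begin{itemize}
\item each $J^{(j)}$ dies, so $\rho(b_j)=\rho(t_j)^{-1}$;
\item since $t_j$ centralizes $B$, each $\rho(b_j)$ is central in $\rho(B)$, so $\rho(B)$ is abelian;
\item $\rho(B)$ is also perfect, hence trivial;
\item then every $t_j$ dies, and so all of $U$ dies.
\end{itemize}
This also makes your infinite-order preprocessing unnecessary. That step was shaky as stated: $B*\ZZ$ is not acyclic, and re-embedding it gives no control over the orders of the new generators.
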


Recall that to disprove a group property $\mathcal{P}$ is a profinite property, we construct two finitely generated residually finite groups $G$ and $H$ with $\widehat{G} \cong \widehat{H}$ such that $G$ has $\mathcal{P}$ but $H$ does not.
In Wise's result, $G$ and $K$ are finitely generated and residually finite.
Moreover, when we choose $Q=U$, the group in Lemma~\ref{lem:U}, we obtain
$\widehat{G} \cong \widehat{K}$ by Lemma~\ref{lem:Profinite_Completion}.
Therefore, when we find a group property $\mathcal{P}$ such that $G$ has $\mathcal{P}$ but $K$ does not, or vice versa,
it immediately follows that $\mathcal{P}$ is not a profinite property.
In particular, we can prove that toral relative hyperbolicity is not a profinite property.
Recall that a group $G$ is \emph{toral relatively hyperbolic} if $G$ is torsion-free and is hyperbolic relatively to finitely many finitely generated abelian subgroups.
The class of toral relativively hyperbolic groups contains torsion-free hyperbolic groups, limit groups (\cite{alibegovic2005combination} and \cite{dahmani2003combination}), and groups acting freely on $\RR^n$-trees \cite{guirardel2004limit}.

\begin{prop} \label{prop:Toral_RH}
Toral relative hyperbolicity is not a profinite property.
\end{prop}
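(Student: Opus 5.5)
Apply Theorem~\ref{thm:RF_Rips_Const} to the group $U$ of Lemma~\ref{lem:U}, with the extension
\[
1 \to K \to G \xrightarrow{\pi} U \to 1 .
\]
Then $G$ is torsion-free hyperbolic, hence toral relatively hyperbolic: it is hyperbolic relative to the empty collection of peripheral subgroups, or relative to the trivial subgroup if one insists on a nonempty family. The kernel $K$ is finitely generated, and it is residually finite because it is a subgroup of the residually finite group $G$. Since $U$ is finitely presented with $\widehat{U}=1$ and $H_2(U,\ZZ)=0$ by acyclicity, Lemma~\ref{lem:Profinite_Completion} gives $\widehat{K}\cong\widehat{G}$. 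It therefore remains to show that $K$ is \emph{not} toral relatively hyperbolic.

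For this I would use finiteness properties. A relatively hyperbolic group whose peripheral subgroups are finitely presented is itself finitely presented (Osin). Finitely generated abelian groups are finitely presented, so every toral relatively hyperbolic group is finitely presented. It thus suffices to show that $K$ is not finitely presented.

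The standard argument runs as follows. Suppose $K$ were finitely presented. A theorem of Bieri says: if $1\to N\to \Gamma\to Q\to 1$ is exact, $\Gamma$ is of type $F_2$ (or $FP_2$), $N$ is finitely presented and $Q$ is infinite, then $N$ is free or of finite index whenever the cohomological dimension of $\Gamma$ is at most $2$. Rips-type groups are built from $C'(1/6)$ (or $C(6)$) presentations, so their presentation complex is aspherical and $G$ has cohomological dimension $2$. Wise's construction yields small-cancellation presentations, so we may assume $\operatorname{cd}(G)\le 2$. The group $U$ is infinite, since it contains every finitely presented group. Hence $K$ would be free. But a free normal subgroup with $G/K\cong U$ finitely presented leads to a contradiction: $G$ would be a one-relator-like extension, and $H_2$ or Euler characteristic considerations apply. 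More simply, a finitely generated normal subgroup of infinite index in a torsion-free hyperbolic group cannot be quasiconvex, whereas a free finitely generated normal subgroup of infinite index in a hyperbolic group forces the quotient to be virtually cyclic or finite (Mosher). This contradicts the fact that $U$ contains all finitely presented groups. Hence $K$ is not finitely presented.

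I expect the main obstacle to be this last step: rigorously justifying that $K$ is not finitely presented. It requires either the $\operatorname{cd}\le 2$ input from Wise's small-cancellation presentation together with Bieri's theorem, or a cleaner route. A cleaner alternative I would try first avoids finite presentability of $K$ and uses only the failure of relative hyperbolicity of the relevant kind. Any toral relatively hyperbolic group is finitely presented, and a finitely presented normal subgroup $K$ of a hyperbolic group with $G/K$ finitely presented makes $U$ of type $F_3$-compatible. Bieri's $\operatorname{cd}2$ theorem is the most direct tool, so I would commit to it and verify that Wise's $G$ is $C'(1/6)$ and hence has $\operatorname{cd}(G)=2$.
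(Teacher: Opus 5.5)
Your outline matches the paper's: the Rips construction over $U$, Lemma~\ref{lem:Profinite_Completion} for $\widehat{K}\cong\widehat{G}$, and $G$ torsion-free hyperbolic. Using Osin to pass directly from ``toral relatively hyperbolic'' to ``finitely presented'' is valid, and slightly more direct than the paper's route through hyperbolicity of $K$. The gap is in the step you flag yourself, that $K$ is not finitely presented; the paper does not prove this but quotes it as a standard property of the Rips construction. Bieri's theorem does legitimately give that a finitely presented $K$ would be free. (The bound $\operatorname{cd}G\le 2$ holds because Wise's group has a torsion-free $C'(1/6)$ presentation, whose presentation complex is aspherical.) Your closing step, however, is false as stated. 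There is no theorem, of Mosher or anyone, saying that a finitely generated free normal subgroup of infinite index in a hyperbolic group has finite or virtually cyclic quotient; hyperbolic free-by-free groups $F_n\rtimes F_k$ with $k\ge 2$ (e.g.\ Bestvina--Feighn--Handel) are counterexamples. The sentence about ``one-relator-like extensions, $H_2$ or Euler characteristic'' is not an argument. The quasiconvexity remark also yields no contradiction, since finitely generated free subgroups of hyperbolic groups need not be quasiconvex.

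What Mosher actually proved is this: if $N\lhd\Gamma$ with $\Gamma$ hyperbolic and $N$ finitely generated, non-elementary and hyperbolic, then $\Gamma/N$ is hyperbolic. That is enough to finish. First, $K\neq 1$, since otherwise $G\cong U\supseteq\ZZ^2$, which is impossible for a hyperbolic group. So $K$ is an infinite normal subgroup of the non-elementary hyperbolic group $G$, and is therefore non-elementary. If $K$ were hyperbolic (in particular, finitely generated free), then $U\cong G/K$ would be hyperbolic, contradicting $\ZZ^2\hookrightarrow U$ from Lemma~\ref{lem:U}(c). Combine this with the observation used in the paper: a toral relatively hyperbolic subgroup of $G$ contains no $\ZZ^2$, so its peripheral subgroups are cyclic and it is hyperbolic. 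This proves the proposition directly, with no need for Bieri's theorem, the cohomological-dimension bound, or finite presentability of $K$.
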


\begin{proof}
Let $U$ be the acyclic group from Lemma~\ref{lem:U}. By applying Theorem~\ref{thm:RF_Rips_Const} with $Q=U$, we obtain the following short exact sequence:
\[
1 \to K \to G \to U \to 1.
\]
By the construction, both $G$ and $K$ are finitely generated and residually finite. Furthermore, Lemma~\ref{lem:Profinite_Completion} implies that $\widehat{G} \cong \widehat{K}$.
Since $G$ is a torsion-free non-elementary hyperbolic group, $G$ is toral relatively hyperbolic.

Now, assume for the sake of contradiction that $K$ is also toral relatively hyperbolic.
This implies that $K$ is hyperbolic relative to a collection of subgroups which are virtually abelian.
Since $K$ is a subgroup of the hyperbolic group $G$, it cannot contain $\ZZ^2$.
Consequently, if $K$ were toral relatively hyperbolic, it would be hyperbolic relative to finite subgroups or trivial subgroups, which implies that $K$ itself must be hyperbolic.
However, it is a standard property of the Rips construction that if the quotient group $U$ is infinite, the kernel $K$ is not finitely presented (see \cite{wise2003residually} or \cite{MR642423}).
On the other hand, if $K$ were hyperbolic, it would necessarily be finitely presented.
This yields a contradiction.

Therefore, $K$ cannot be toral relatively hyperbolic, whereas $G$ is. This completes the proof.
\end{proof}

\begin{rmk}
It is well established that hyperbolicity is not a profinite property.
However, to the best of the authors' knowledge, the question of whether toral relative hyperbolicity constitutes a profinite property has not been explicitly addressed in the existing literature.
We note that, in the specific context of virtually compact special groups, toral relative hyperbolicity is indeed a profinite property, as detected by the profinite completion due to a recent result of P. Zalesskii \cite[Theorem 3.7]{zalesskii2024profinite}.
Our result demonstrates that this rigidity does not hold for general finitely generated residually finite groups.
\end{rmk}

\section{The proof of the main result}\label{sec:main}

We now prove the following.

\begin{thm} \label{thm:main}
There exist finitely generated residually finite groups $G$ and $H$ satisfying:
\begin{enumerate}[label=(\arabic*)]
  \item $\widehat{G}\cong \widehat{H}$;
  \item $G$ is co-Hopfian;
  \item $H$ is not co-Hopfian.
\end{enumerate}
Consequently, co-Hopfianity is not a profinite property.
\end{thm}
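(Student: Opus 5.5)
We follow the outline in the Introduction. Let $U$ be the group from Lemma~\ref{lem:U}, and apply Theorem~\ref{thm:RF_Rips_Const} with $Q=U$. This yields
\[
1\to K\to G\xrightarrow{\pi} U\to 1,
\]
where $G$ is torsion-free, hyperbolic and residually finite, and $K$ is finitely generated.

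\textbf{Co-Hopfianity of $G$.} By Theorem~\ref{thm:Sela} it suffices to show that $G$ is one-ended. Since $G$ is torsion-free, Stallings' theorem reduces this to showing that $G$ is not a nontrivial free product and is not $\ZZ$. We would argue this directly from the structure of Wise's construction: the presentation is a $C'(1/6)$ small-cancellation presentation, in fact a cyclically presented complex, whose relators involve all generators with long pieces. Its presentation complex has a connected, cut-vertex-free Whitehead graph (link), which gives one-endedness. If the paper's version of Wise's theorem does not visibly provide this, we would replace $G$ by a one-ended variant by applying the construction with the relators chosen so that the link graph is connected without cut vertices; this freedom is standard in Rips-type constructions. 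We expect this step to be the main obstacle, since it requires looking inside the construction rather than using only the listed properties.

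\textbf{The subgroup $H$.} Let $F=\langle a,t \mid t^{-1}at=a^2\rangle=\mathrm{BS}(1,2)$, and embed $F$ into $U$ by Lemma~\ref{lem:U}(c). We need a subgroup $A\le U$ with $A\cong U$ and some $t\in U$ with $t^{-1}At\subsetneq A$. The cleanest way to get this is to first form the HNN-type group
\[
V=\langle U, s \mid s^{-1}us=\phi(u),\ u\in U\rangle,
\]
where $\phi$ is an embedding $U\to U$ with proper image. Such an embedding exists because $U\ast\ZZ$ is finitely presented, hence embeds in $U$, and the composite $U\hookrightarrow U\ast\ZZ\hookrightarrow U$ has proper image. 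The group $V$ is finitely presented, so it embeds in $U$ via some map $\iota$. Set $A=\iota(U)$ and $t=\iota(s)$. Then $t^{-1}At=\iota(\phi(U))\subsetneq A$ and $A\cong U$.

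Now put $H=\pi^{-1}(A)$ and choose $\tilde t\in G$ with $\pi(\tilde t)=t$. Since $K$ is normal and contained in $H$, we get
\[
\tilde t^{-1}H\tilde t=\pi^{-1}(t^{-1}At)\subsetneq \pi^{-1}(A)=H.
\]
The inclusion is proper because $\pi$ maps $H$ onto $A$ and $t^{-1}At\ne A$. Thus conjugation by $\tilde t$ embeds $H$ onto a proper subgroup of itself, so $H$ is not co-Hopfian.

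\textbf{Remaining properties of $H$.}
\begin{itemize}
\item $H$ is residually finite, being a subgroup of $G$.
\item $H$ is finitely generated: $A\cong U$ is finitely generated and $K$ is finitely generated, so generators of $K$ together with lifts of generators of $A$ generate $H$.
\end{itemize}

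\textbf{Profinite completions.} We have exact sequences
\[
1\to K\to H\to A\to 1 \quad\text{and}\quad 1\to K\to G\to U\to 1 .
\]
Here $A\cong U$ is finitely presented with $\widehat A=1$ and $H_2(A,\ZZ)=0$. Lemma~\ref{lem:Profinite_Completion} therefore gives $\widehat K\cong\widehat H$ and $\widehat K\cong\widehat G$, both induced by inclusion, so $\widehat G\cong\widehat H$. This establishes items (1)–(3), and hence that co-Hopfianity is not a profinite property.
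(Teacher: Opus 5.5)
Apart from the one-endedness of $G$, your proposal is correct and follows the paper's route. The pieces match: the ascending HNN extension of $U$ along a non-surjective self-embedding, embedded back into $U$ to produce $A$ and $t$; $H=\pi^{-1}(A)$, conjugated properly into itself by a lift of $t$; and Bridson--Grunewald applied to both $1\to K\to G\to U\to 1$ and $1\to K\to H\to A\to 1$. Two small remarks: the $\mathrm{BS}(1,2)$ you introduce is never used, and properness of $\phi(U)$ is immediate, since $U\subsetneq U*\ZZ$ and $U*\ZZ\hookrightarrow U$ is injective.

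\textbf{The gap} is the step you flag yourself: you never actually prove that $G$ is one-ended. The appeal to a connected, cut-vertex-free link in Wise's complex is not verified. Moreover, such a link condition does not by itself give one-endedness: the presentation $\langle a,b\mid [a,b],a\rangle$ of $\ZZ$ has a connected link without cut vertices. So you would need a specific criterion for nonpositively curved or $C'(1/6)$ complexes, and a check that Wise's complex satisfies its hypotheses. Your fallback of modifying the construction would require re-establishing hyperbolicity, residual finiteness and finite generation of the kernel for the new complex, which you do not do.

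\textbf{The missing idea} is that no inspection of the construction is needed; one-endedness follows from the listed properties.
\begin{itemize}
\item $K$ is a finitely generated normal subgroup of $G$ with $G/K\cong U$ infinite.
\item $K\neq 1$. Otherwise $G\cong U$ would be a nontrivial residually finite group with trivial profinite completion.
\item By Baumslag and Karrass--Solitar, a nontrivial finitely generated normal subgroup of a nontrivial free product has finite index. Hence $G$ is not a nontrivial free product.
\item $G\not\cong\ZZ$, since $G$ surjects onto the non-abelian group $U$.
\end{itemize}
Your own Stallings reduction for torsion-free $G$ then gives one-endedness, and Sela's theorem (Theorem~\ref{thm:Sela}) applies. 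This is exactly the paper's argument in Lemma~\ref{lem:G-coHopfian}.
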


\subsection{The construction of a co-Hopfian group $G$}

Let $U$ be as in Lemma~\ref{lem:U}.
Apply Theorem~\ref{thm:RF_Rips_Const} with $Q=U$ to obtain
\[
1\to K \to G \xrightarrow{\pi} U \to 1.
\]

\begin{lem}\label{lem:G-coHopfian}
The group $G$ is co-Hopfian.
\end{lem}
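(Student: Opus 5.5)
The plan is to deduce Lemma~\ref{lem:G-coHopfian} from Theorem~\ref{thm:Sela}. Theorem~\ref{thm:RF_Rips_Const} already tells us that $G$ is hyperbolic, so the only thing left to show is that $G$ is one-ended. By Stallings' theorem, a finitely generated group has $0$, $1$, $2$ or infinitely many ends. It has infinitely many ends exactly when it splits nontrivially as an amalgam or HNN extension over a finite subgroup. Since $G$ is torsion-free, any finite subgroup is trivial. So we must rule out three cases: $G$ finite, $G$ virtually $\ZZ$, and $G$ a nontrivial free product.

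The first two cases are easy. $G$ surjects onto $U$, and $U$ is infinite: it contains every finitely presented group, for instance $\ZZ$. Hence $G$ is infinite. If $G$ were virtually $\ZZ$, then its quotient $U$ would be virtually cyclic. But a virtually cyclic group is residually finite, so an infinite one has nontrivial profinite completion. This contradicts $\widehat{U}=1$. (Alternatively, $U$ contains non-abelian free groups.)

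The main step is to exclude a free product decomposition $G=A*B$ with $A,B\neq 1$. The approach is to use the normal subgroup $K$. Let $G$ act on the Bass--Serre tree $T$ of the splitting, and restrict the action to $K$.

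\begin{itemize}
\item Suppose $K$ fixes a vertex. Since $K$ is normal, $K$ then fixes the entire $G$-orbit of that vertex. Stabilizers of two distinct vertices intersect in an edge stabilizer, which is trivial. So $K$ would have to be trivial, which is impossible, since $K$ is infinite by the Rips construction.
\item Suppose $K$ acts without a global fixed point. Then, since $K$ is finitely generated, its minimal invariant subtree $T_K$ is unique. By normality, $T_K$ is $G$-invariant, hence $T_K=T$ by minimality of $T$. So $K$ acts minimally on $T$ with trivial edge stabilizers, and $K$ acquires a nontrivial graph-of-groups decomposition with trivial edge groups. Because $K$ is finitely generated and $K$ acts cocompactly on $T_K=T$, the quotient graph $T/K$ is finite. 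On the other hand, $G/K=U$ acts on $T/K$ with quotient $T/G$, a single edge. The vertex stabilizers of this $U$-action are images of $A$ and $B$.
\end{itemize}

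This is the obstacle, and the plan for getting past it is as follows. An infinite group acting on a finite graph has a finite-index subgroup acting trivially. So $U$ has a finite-index subgroup, which is impossible since $\widehat{U}=1$, unless $U$ acts trivially on $T/K$. If the action is trivial, then $T/K=T/G$ is a single edge. Hence $K$ has a one-edge decomposition with vertex groups $K\cap A$ and $K\cap B$, and $K$ is transitive on the edges of $T$. Edge stabilizers in $G$ are trivial, so this forces $G=K$, contradicting $U\neq 1$. Therefore $G$ is one-ended, and Theorem~\ref{thm:Sela} shows that $G$ is co-Hopfian.

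As a fallback, I would cite the standard fact that the Rips/Wise construction produces groups with the small cancellation property $C'(1/6)$ (or suitable relator patterns) that are one-ended. In the paper this can likely be quoted from \cite{wise2003residually}.
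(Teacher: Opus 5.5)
Your proof is correct, but it argues the key step more directly than the paper does.

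\textbf{Paper's route.} After reducing to one-endedness via Theorem~\ref{thm:Sela}, the paper cites the classical theorem of Baumslag and Karrass--Solitar: a nontrivial finitely generated normal subgroup of a nontrivial free product has finite index. It then concludes using only that $U$ is infinite.

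\textbf{Your route.} You reprove that fact in this setting with Bass--Serre theory:
\begin{itemize}
\item If $K$ fixes a vertex, it is trivial, by normality and trivial edge stabilizers.
\item Otherwise, the unique minimal $K$-subtree is $G$-invariant, so it equals $T$. Finite generation plus minimality then makes $T/K$ finite.
\end{itemize}
This makes the argument self-contained. You also explicitly exclude the finite and two-ended cases. The paper's proof skips these when it says Stallings' theorem yields a free product; for instance, $\ZZ$ is not a nontrivial free product.

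\textbf{An unnecessary step.} Your final step does not need $\widehat{U}=1$. Since $G$ acts freely and transitively on the edges of $T$, the edges of $T/K$ are in bijection with $K\backslash G\cong U$. So finiteness of $T/K$ already forces $[G:K]<\infty$. This is exactly the classical argument, and it works for any infinite quotient. Your chain of reasoning is valid but uses more hypotheses than needed: a finite-index subgroup of $U$ acts trivially on $T/K$, hence all of $U$ does, hence $T/K$ is a single edge, hence $G=K$.

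\textbf{The fallback.} Drop it. A $C'(1/6)$ presentation does not by itself guarantee one-endedness; free groups and free products of $C'(1/6)$ groups are examples. Your main argument does not need it.
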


\begin{proof}
By Theorem~\ref{thm:Sela}, a one-ended hyperbolic group is co-Hopfian.
The group $G$ is a torsion-free hyperbolic group by Theorem~\ref{thm:RF_Rips_Const}; it remains to show that $G$ is one-ended.

Suppose $G$ is not one-ended. Since $G$ is finitely generated, Stallings' theorem on ends implies that $G$ splits nontrivially as a free product
\[
G = G_1 * G_2
\]
with $G_1,G_2$ nontrivial.
It is a classical fact that if $G=G_1*G_2$ is a nontrivial free product and $N\lhd G$ is a nontrivial finitely generated normal subgroup, then $N$ has finite index in $G$ (see \cite[p.~679]{baumslag1966intersections} and \cite{karrass1970subgroups}).
Applying this to the nontrivial finitely generated normal subgroup $K\lhd G$, we would conclude that $[G:K]<\infty$.
This contradicts $G/K\cong U$ being infinite.
Therefore, $G$ is one-ended, and hence co-Hopfian.
\end{proof}

\subsection{A self-embedding via conjugation inside $U$}

\begin{lem} \label{lem:U-structure}
Let $U$ be the group in Lemma~\ref{lem:U}.
There exists a subgroup $A<U$ and an element $t\in U$ such that:
\begin{enumerate}[label=(\arabic*)]
  \item $A \cong U$;
  \item $t^{-1}At \subsetneq A$.
\end{enumerate}
\end{lem}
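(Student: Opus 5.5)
We want $A<U$ with $A\cong U$ and $t\in U$ with $t^{-1}At\subsetneq A$. The idea is to build a finitely presented group containing a copy of $U$ that is conjugated properly into itself, and then embed that group back into $U$ using the universality property (c) of Lemma~\ref{lem:U}.

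First we construct a finitely presented ascending HNN extension. Let
\[
\phi\colon U\to U*U,\qquad u\mapsto (u,1)
\]
be the inclusion into the first free factor; write $P:=U*U$. Since $P$ is not co-Hopfian in this sense, $\phi(P)$ should be a proper subgroup of $P$. So we instead use the endomorphism $\psi\colon P\to P$ that sends the first factor identically to itself and the second factor into the first factor via a conjugate. Rather than fuss with such maps, the cleanest choice is the following. Take
\[
P:=U*\ZZ,\qquad \ZZ=\langle s\rangle,
\]
and let $\psi\colon P\to P$ be the injective endomorphism fixing $U$ pointwise and sending $s\mapsto s^2$. It is injective, being the free product of two injections onto free factors of a free product. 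It is not surjective, since $s\notin\psi(P)=U*\langle s^2\rangle$. Now form the ascending HNN extension
\[
E:=\langle P,\tau \mid \tau^{-1}x\tau=\psi(x)\ (x\in P)\rangle.
\]
Because $P$ is finitely presented and finitely generated, $E$ is finitely presented: its relations are those of $P$ together with $\tau^{-1}x\tau=\psi(x)$ for $x$ ranging over a finite generating set of $P$. In $E$ we have $\tau^{-1}P\tau=\psi(P)\subsetneq P$, and $P$ embeds in $E$ by Britton's lemma.

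The difficulty is that $P\not\cong U$ a priori. To fix this, we embed $E$ into $U$ via Lemma~\ref{lem:U}(c), obtaining $\iota\colon E\hookrightarrow U$. Now $U$ sits inside $P$, so we want a copy of the \emph{whole} $U$ playing the role of $P$. We therefore iterate: set $A:=\iota(P')$ where $P'$ should be isomorphic to $U$. The natural way to achieve this is to replace $P$ by $U$ itself from the start. We need an injective, non-surjective endomorphism $\psi\colon U\to U$. To get one, use (c) again: $U*\ZZ$ is finitely presented, so there is an embedding $j\colon U*\ZZ\hookrightarrow U$. Then $\psi:=j|_U\colon U\to U$ is injective with image $j(U)$, which misses $j(s)\neq 1$ (indeed $j(s)\notin j(U)$ by injectivity of $j$). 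Hence $\psi$ is not surjective.

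Now form $E:=U*_\psi=\langle U,\tau\mid \tau^{-1}u\tau=\psi(u)\rangle$. It is finitely presented, $U$ embeds in it, and $\tau^{-1}U\tau=\psi(U)\subsetneq U$. Finally take an embedding $\iota\colon E\hookrightarrow U$ from Lemma~\ref{lem:U}(c), and set
\[
A:=\iota(U),\qquad t:=\iota(\tau).
\]
Then $A\cong U$, and since $\iota$ is injective,
\[
t^{-1}At=\iota(\tau^{-1}U\tau)=\iota(\psi(U))\subsetneq\iota(U)=A.
\]
The main point to verify carefully is that the HNN extension is finitely presented and that $U\to E$ is injective. The latter is the standard fact that base groups embed in HNN extensions, applied with associated subgroups $U$ and $\psi(U)$. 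Strictness of the inclusion is preserved because $\iota$ is injective.
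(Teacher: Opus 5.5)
Your final argument is correct and is essentially the paper's proof: you embed a free product of $U$ with a free group into $U$, restrict to the $U$ factor to get a non-surjective self-embedding $\psi$, form the finitely presented ascending HNN extension, and embed it back into $U$ by universality. The one difference is that you get non-surjectivity directly from $j(s)\notin j(U)$ using $U*\ZZ$, rather than the paper's $H_1$ comparison with $U*F_2$, which is slightly more elementary; the abandoned first attempt (with $U*U$ and $s\mapsto s^2$) should simply be deleted.
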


\begin{proof}
Let $F_2$ be the free group of rank $2$. The free product $U*F_2$ is finitely presented.
By Lemma~\ref{lem:U}(c), there exists an embedding
\[
\psi: U*F_2 \hookrightarrow U.
\]
We claim that $\psi$ is not surjective.
Indeed, since $U$ is acyclic, $H_1(U;\ZZ)=0$.
On the other hand,
\[
H_1(U*F_2;\ZZ)\cong H_1(U;\ZZ)\oplus H_1(F_2;\ZZ)\cong 0\oplus \ZZ^2 \cong \ZZ^2.
\]
If $\psi$ were surjective, it would be an isomorphism, forcing $H_1(U;\ZZ)\cong H_1(U*F_2;\ZZ)$, a contradiction.
Thus $\psi(U*F_2)$ is a proper subgroup of $U$.

Let $i:U \hookrightarrow U*F_2$ be the natural inclusion of the free factor.
Define
\[
\sigma := \psi\circ i : U \to U.
\]
Then $\sigma$ is injective, and its image satisfies
\[
\sigma(U)=\psi(i(U)) \subset \psi(U*F_2) \subsetneq U,
\]
so $\sigma$ is not surjective.

Now form the ascending HNN extension
\[
L \;:=\; \langle\, S_U, s \mid R_U, s^{-1}us = \sigma(u)\ \forall\, u\in U\,\rangle.
\]
Here, $\left< S_U \mid R_U \right>$ is a group presentation for $U$.
Since $U$ is finitely presented and $\sigma$ is determined by images of generators, the group $L$ is finitely presented.
By Lemma~\ref{lem:U}(c) again, there exists an embedding
\[
\theta: L \hookrightarrow U.
\]
Set $A:=\theta(U)$ and $t:=\theta(s)$.
Then $A\cong U$ since $\theta$ is injective.
Inside $L$ we have
\[
s^{-1}Us = \sigma(U) \subsetneq U,
\]
and applying $\theta$ yields
\[
t^{-1}At \;=\; \theta(s)^{-1}\theta(U)\theta(s)
\;=\; \theta(s^{-1}Us)
\;=\; \theta(\sigma(U))
\;\subsetneq\; \theta(U)=A.
\]
This completes the proof.
\end{proof}

\subsection{Constructing $H$ as a preimage subgroup of $G$}

Let $A<U$ and $t\in U$ be as in Lemma~\ref{lem:U-structure}.
Recall the short exact sequence from Wise's construction:
\[
1 \to K \to G \xrightarrow{\pi} U \to 1.
\]
Define
\[
H := \pi^{-1}(A) < G.
\]
Choose $g\in G$ with $\pi(g)=t$.

\begin{lem}\label{lem:conj-preimage}
Let $c_g:G\to G$ be conjugation by $g$, $c_g(x)=g^{-1}xg$.
Then
\[
c_g(H)=\pi^{-1}(t^{-1}At).
\]
\end{lem}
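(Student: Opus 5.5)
The plan is to verify the equality $c_g(H)=\pi^{-1}(t^{-1}At)$ by a direct double inclusion. The only inputs are that $\pi$ is a surjective homomorphism, so it commutes with conjugation, and that $\pi(g)=t$. The key identity is
\[
\pi(c_g(x))=\pi(g^{-1}xg)=\pi(g)^{-1}\pi(x)\pi(g)=t^{-1}\pi(x)t \quad\text{for all } x\in G,
\]
which says that $\pi\circ c_g = c_t\circ\pi$, where $c_t$ denotes conjugation by $t$ in $U$.

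For the inclusion $c_g(H)\subseteq\pi^{-1}(t^{-1}At)$, take $x\in H$, so that $\pi(x)\in A$. The identity above gives $\pi(c_g(x))=t^{-1}\pi(x)t\in t^{-1}At$, and hence $c_g(x)\in\pi^{-1}(t^{-1}At)$.

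For the reverse inclusion, take $y\in G$ with $\pi(y)\in t^{-1}At$, and set $x:=gyg^{-1}$, so that $y=c_g(x)$. Then
\[
\pi(x)=t\,\pi(y)\,t^{-1}\in t(t^{-1}At)t^{-1}=A,
\]
so $x\in H$ and $y\in c_g(H)$.

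Alternatively, both inclusions follow at once from the general fact that for any automorphism $\alpha$ of $G$ and any bijection $\beta$ of $U$ with $\pi\circ\alpha=\beta\circ\pi$, one has $\alpha(\pi^{-1}(B))=\pi^{-1}(\beta(B))$ for every $B\subseteq U$. Here this is applied with $\alpha=c_g$, $\beta=c_t$ and $B=A$.

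There is no real obstacle in this argument. The one point needing care is the direction of the conjugation: the convention $c_g(x)=g^{-1}xg$ must be matched with $t^{-1}At$, and the reverse inclusion must use the inverse conjugation $x=gyg^{-1}$. Surjectivity of $\pi$ is not even needed for the equality itself; it only guarantees that a lift $g$ of $t$ exists.
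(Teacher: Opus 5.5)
Your proposal is correct and follows essentially the same double-inclusion argument as the paper. It uses $\pi(g^{-1}xg)=t^{-1}\pi(x)t$ for one direction and pulls back via $gyg^{-1}$ for the other. Your added remark about the general fact $\alpha(\pi^{-1}(B))=\pi^{-1}(\beta(B))$ is also valid.
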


\begin{proof}
If $\alpha\in c_g(H)$, then $\alpha=g^{-1}hg$ for some $h\in H$, so
\[
\pi(\alpha)=\pi(g)^{-1}\pi(h)\pi(g)=t^{-1}\pi(h)t \in t^{-1}At.
\]
Hence $\alpha\in \pi^{-1}(t^{-1}At)$.

Conversely, if $\beta\in \pi^{-1}(t^{-1}At)$, then $\pi(\beta)\in t^{-1}At$, so
\[
\pi(g\beta g^{-1})=\pi(g)\pi(\beta)\pi(g)^{-1}\in A,
\]
hence $g\beta g^{-1}\in H$ and therefore $\beta\in g^{-1}Hg=c_g(H)$.
\end{proof}

\begin{prop}\label{prop:H-not-cohopf}
The group $H$ is not co-Hopfian.
\end{prop}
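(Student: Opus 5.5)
The plan is to exhibit an explicit injective, non-surjective endomorphism of $H$, namely the restriction of the conjugation $c_g$ to $H$, where $g\in G$ is the chosen lift of $t$.

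First, $c_g$ is an automorphism of $G$, so its restriction $c_g|_H : H\to G$ is injective and gives an isomorphism $H\cong c_g(H)$.

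Second, Lemma~\ref{lem:conj-preimage} identifies the image as $c_g(H)=\pi^{-1}(t^{-1}At)$. By Lemma~\ref{lem:U-structure}(2) we have $t^{-1}At\subseteq A$, and taking preimages under $\pi$ preserves inclusion. Hence $c_g(H)\subseteq \pi^{-1}(A)=H$, so $c_g|_H$ is an injective homomorphism $H\to H$.

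Third, I show that the image is proper. Since $\pi:G\to U$ is surjective, $\pi(\pi^{-1}(B))=B$ for every subgroup $B<U$. So if $\pi^{-1}(t^{-1}At)=\pi^{-1}(A)$, applying $\pi$ would give $t^{-1}At=A$. This contradicts the strict inclusion $t^{-1}At\subsetneq A$. Concretely, choose $a\in A\setminus t^{-1}At$ and any $h\in G$ with $\pi(h)=a$; then $h\in H\setminus c_g(H)$.

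Therefore $H$ is isomorphic to the proper subgroup $c_g(H)$ of itself, so $H$ is not co-Hopfian. Every step is formal. The only point needing care is the last one: strictness of the inclusion survives passing to preimages precisely because $\pi$ is surjective, which holds by Theorem~\ref{thm:RF_Rips_Const}.
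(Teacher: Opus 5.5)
Your proof is correct and follows the paper's route exactly: restrict $c_g$ to $H$, use Lemma~\ref{lem:conj-preimage} to identify $c_g(H)=\pi^{-1}(t^{-1}At)$, and pass the strict inclusion $t^{-1}At\subsetneq A$ to preimages. The one difference is that you justify why strictness survives taking preimages, namely the surjectivity of $\pi$; the paper asserts this step without comment.
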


\begin{proof}
By Lemma~\ref{lem:U-structure}, we have $t^{-1}At\subsetneq A$.
Taking full preimages under $\pi$ preserves strict inclusion, so
\[
\pi^{-1}(t^{-1}At)\subsetneq \pi^{-1}(A)=H.
\]
By Lemma~\ref{lem:conj-preimage}, $\pi^{-1}(t^{-1}At)=c_g(H)$.
Conjugation restricts to an isomorphism $c_g|_H:H\to c_g(H)$, and $c_g(H)$ is a proper subgroup of $H$.
Thus $H$ is isomorphic to a proper subgroup of itself, so $H$ is not co-Hopfian.
\end{proof}

\subsection{Profinite completions and finiteness properties}

\begin{lem}\label{lem:samePC}
$\widehat{G}\cong \widehat{H}$.
\end{lem}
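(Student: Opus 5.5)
The plan is to show that the inclusions $K \hookrightarrow H \hookrightarrow G$ both induce isomorphisms on profinite completions, using Lemma~\ref{lem:Profinite_Completion} twice. For the pair $K<G$ this is exactly the argument already given: the sequence $1\to K\to G\xrightarrow{\pi} U\to 1$ has $U$ finitely presented, $\widehat{U}=1$ and $H_2(U,\ZZ)=0$ by Lemma~\ref{lem:U}. Hence the map $\widehat{K}\to\widehat{G}$ induced by inclusion is an isomorphism.

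For the pair $K<H$, I restrict $\pi$ to $H=\pi^{-1}(A)$. Its kernel is still $K$, since $K=\pi^{-1}(1)\subset\pi^{-1}(A)$, and its image is $A$. This gives
\[
1\to K\to H\xrightarrow{\pi|_H} A\to 1 .
\]
By Lemma~\ref{lem:U-structure}(1), $A\cong U$, so $A$ is finitely presented, $\widehat{A}=1$ and $H_2(A,\ZZ)=0$. Lemma~\ref{lem:Profinite_Completion} then shows that $\widehat{K}\to\widehat{H}$ is an isomorphism, and so
\[
\widehat{H}\cong\widehat{K}\cong\widehat{G}.
\]
In fact the isomorphism is compatible with inclusions: the composite $\widehat{K}\to\widehat{H}\to\widehat{G}$ is the map induced by $K\hookrightarrow G$, so $\widehat{H}\to\widehat{G}$ is itself an isomorphism. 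We only need the abstract isomorphism, though.

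Two bookkeeping points remain for Theorem~\ref{thm:main}. First, $H$ is finitely generated: it is generated by the three generators of $K$ from Theorem~\ref{thm:RF_Rips_Const} together with lifts to $G$ of a finite generating set of $A\cong U$. Second, $H$ is residually finite, being a subgroup of the residually finite group $G$.

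I expect no real obstacle. The only point to check is that Lemma~\ref{lem:Profinite_Completion} is applied to an honest short exact sequence whose quotient is isomorphic to $U$, rather than to a subgroup of $U$ with unknown homology. That is exactly why Lemma~\ref{lem:U-structure} insists on $A\cong U$ and not merely on $A$ being some subgroup of $U$.
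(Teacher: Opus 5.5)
Your proof is correct and follows the paper's argument: you apply Lemma~\ref{lem:Profinite_Completion} to $1\to K\to G\to U\to 1$ and to the restricted sequence $1\to K\to H\to A\to 1$, using $A\cong U$ to get finite presentability, $\widehat{A}=1$ and $H_2(A,\ZZ)=0$. Your extra remark that $\widehat{H}\to\widehat{G}$ is itself the inclusion-induced isomorphism is a harmless strengthening that the paper does not need.
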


\begin{proof}
First consider $1\to K\to G\to U\to 1$.
By Lemma~\ref{lem:U}, $\widehat{U}=1$ and $H_2(U;\ZZ)=0$ since $U$ is acyclic.
By Lemma~\ref{lem:Profinite_Completion}, the inclusion-induced map $\widehat{K}\to\widehat{G}$ is an isomorphism.

Next, since $H=\pi^{-1}(A)\supset K$, we have a short exact sequence
\[
1\to K \to H \to A \to 1.
\]
Because $A\cong U$ by Lemma~\ref{lem:U-structure}(1), we have $\widehat{A}\cong\widehat{U}=1$ and $H_2(A;\ZZ)\cong H_2(U;\ZZ)=0$.
Again by Lemma~\ref{lem:Profinite_Completion}, the inclusion-induced map $\widehat{K}\to\widehat{H}$ is an isomorphism.
Therefore
\[
\widehat{G}\cong \widehat{K}\cong \widehat{H}.
\]
\end{proof}

\begin{lem}\label{lem:fg-rf}
The groups $G$ and $H$ are finitely generated and residually finite.
\end{lem}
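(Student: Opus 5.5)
For $G$, both properties come directly from Theorem~\ref{thm:RF_Rips_Const}. That theorem says $G$ is hyperbolic, hence finitely generated (indeed finitely presented), and it states residual finiteness outright. So the only real work concerns $H=\pi^{-1}(A)$.

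Residual finiteness of $H$ is immediate, because it is inherited by subgroups. Given $1\neq h\in H\le G$, choose a finite-index normal subgroup $N\lhd G$ with $h\notin N$. Then $H\cap N$ is a finite-index normal subgroup of $H$ that misses $h$.

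For finite generation of $H$, I will use the short exact sequence
\[
1\to K\to H\xrightarrow{\pi|_H} A\to 1
\]
already written down in the proof of Lemma~\ref{lem:samePC}. Here $K$ is finitely generated (by three elements) by Theorem~\ref{thm:RF_Rips_Const}. The quotient $A\cong U$ is finitely generated, since $U$ is finitely presented by Lemma~\ref{lem:U}. An extension of a finitely generated group by a finitely generated group is finitely generated: take generators $k_1,k_2,k_3$ of $K$ together with lifts $h_1,\dots,h_m\in H$ of a finite generating set $a_1,\dots,a_m$ of $A$. Any $h\in H$ has $\pi(h)$ equal to a word $w(a_1,\dots,a_m)$. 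Then $w(h_1,\dots,h_m)^{-1}h$ lies in $\ker\pi\cap H=K$, so $h$ lies in the subgroup generated by the $k_i$ and $h_j$.

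No step is a genuine obstacle. The only point to check is that $K\subseteq H$, so that the kernel of $\pi|_H$ is exactly $K$; this holds because $1\in A$ and hence $K=\pi^{-1}(1)\subseteq\pi^{-1}(A)$.
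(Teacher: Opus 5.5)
Your proposal is correct and follows the paper's proof essentially verbatim: residual finiteness of $H$ is inherited from $G$, and finite generation of $H$ comes from the extension $1\to K\to H\to A\to 1$ with $K$ finitely generated and $A\cong U$ finitely presented. You also spell out the routine details that the paper leaves implicit, namely the extension argument and the check that $K\subseteq H$.
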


\begin{proof}
By Theorem~\ref{thm:RF_Rips_Const}, $G$ is finitely generated and residually finite.
Since $H<G$, $H$ is also residually finite.

To see $H$ is finitely generated, consider the short exact sequence
\[
1\to K \to H \to A \to 1.
\]
Here $K$ is finitely generated (Theorem~\ref{thm:RF_Rips_Const}) and $A\cong U$ is finitely presented (hence finitely generated).
An extension of a finitely generated group by a finitely generated group is finitely generated; thus $H$ is finitely generated.
\end{proof}

\begin{proof}[Proof of Theorem~\ref{thm:main}]
By Lemma~\ref{lem:fg-rf}, $G$ and $H$ are finitely generated residually finite.
Lemma~\ref{lem:samePC} shows $\widehat{G}\cong \widehat{H}$.
Lemma~\ref{lem:G-coHopfian} shows $G$ is co-Hopfian.
Proposition~\ref{prop:H-not-cohopf} shows $H$ is not co-Hopfian.
Hence, co-Hopfianity is not a profinite property.
\end{proof}

In our construction, $G$ is hyperbolic and hence finitely presented.
However, we do not know whether $H$ is finitely presented.
Although we obtain a finitely generated example, it remains open whether such an example can be finitely presented.
We therefore pose the following question.

\begin{ques}
Is co-Hopfianity a profinite property among finitely presented residually finite groups?
\end{ques}


\end{document}